\newcommand{\sgn}{{\rm sgn}\kern 0.12em}
\newcommand{\argmin}{{\rm argmin}\kern 0.12em}
\newcommand{\cL}{{\mathcal L}}
\newcommand{\cK}{\mathcal K}
\newcommand{\bd}{{\rm bd}\kern 0.12em}
\newcommand{\inte}{{\rm int}\kern 0.12em}
\title{A characterization of the behavior of the Anderson acceleration on linear problems}
\author{Florian A. Potra\thanks{Department of Mathematics and Statistics,
University of Maryland Baltimore County, 1000 Hilltop Circle,
Baltimore, MD 22150, USA ({\tt potra@umbc.edu}). The work of
this author was supported by the National Science Foundation under
Grant No. 0728878.}\and Hans Engler\thanks{Department of Mathematics,
Georgetown University,
Box 571233,
Washington, D.C. 20057,
U.S.A.({\tt engler@georgetown.edu
})}}
\begin{document}

\maketitle

\begin{abstract}
We give a complete characterization of the behavior of the Anderson acceleration (with arbitrary nonzero mixing parameters) on linear problems. Let $\nu$ be the grade of the residual at the starting point with respect to the matrix defining the linear problem.  We show that if Anderson acceleration does not stagnate (that is, produces different iterates) up to $\nu$, then the sequence of its iterates converges to the exact solution of the linear problem. Otherwise, the Anderson acceleration converges to the wrong solution.  Anderson acceleration and of GMRES are essentially equivalent up to the index where the iterates of Anderson acceleration begin to stagnate.  This result holds also for  an optimized version of  Anderson acceleration, where at each step the mixing parameter is chosen so that it minimizes the residual of the current iterate.
\end{abstract}

\begin{keywords}
Anderson acceleration, Anderson mixing, GMRES
\end{keywords}

\begin{AMS}
65F10  , 65B99
\end{AMS}

\section{Introduction}\label{s:aa}
The Anderson acceleration, or Anderson mixing, was initially developed in 1965 by Donald Anderson \cite{Ande65} as an iterative procedure for solving some  nonlinear integral equations arising in physics. It turns out that the Anderson acceleration is very efficient for solving other types of nonlinear equations as well, see \cite{FaSa08}, \cite{WaNi10}, and the literature cited therein. In \cite{WaNi10} it was shown that on fixed point linear problems the Anderson acceleration, with all mixing parameters equal to 1, and \\GMRES are ``essentially equivalent". In the present paper we extend the results of \cite{WaNi10} for general linear problems and general nonzero mixing parameters. By introducing the notion of index of the Anderson acceleration, $\kappa_A$, we manage to give a complete characterization of the behavior of the Anderson acceleration with infinite history on linear problems. We show that the index of the Anderson acceleration is the same for any choice of nonzero mixing parameters, and  that it can be defined in terms of the stagnation index of the GMRES method. The main result of the paper shows that if the index of the Anderson acceleration coincides with the grade of the residual at the starting point with respect to the matrix defining the linear problem, $\nu(A,r_0)$ \cite[pp 37]{Wilk65}, then the Anderson acceleration converges to the exact solution of the linear problem in either  $\nu(A,r_0)$ or  $\nu(A,r_0)+1$ steps. If  $\kappa_A<\nu(A,r_0)$, then the Anderson acceleration converges to the wrong solution. We also investigate the optimal Anderson acceleration, where at each step the mixing parameter is chosen so that it minimizes the residual of the current iterate. We show that the performance of the optimal Anderson acceleration is not essentially  better than the performance of the  Anderson acceleration with arbitrary nonzero mixing parameters.
\section{{Simple Mixing}}
Consider the linear equation
\begin{equation} \label{A}
Ax + b = 0
\end{equation}
where $A$ is a nonsingular $N \times N$ matrix and $b$ is a given $N$ vector. We wish to solve (\ref{A}) with various iterative methods that produce sequences $x_n^{[M]}$ where the superscript $[M]$ indicates the method that is being used. Let $x^* = -A^{-1}b$ be the exact solution of this problem. Since the exact solution is usually not known,the errors  $x_n^{[M]} - x^*$  are difficult to estimate, so that the performance of the method is assessed by analyzing the residuals $Ax_n^{[M]} + b$. All methods use the same starting point $x_0^{[M]}=x_0$ at which the residual is $r_0=Ax_0 + b$.

\medskip
Consider now the simple fixed point iteration
\begin{equation}\label{fp}
x_{n+1}= x_n + Ax_n + b = Mx_n + b
\end{equation}
where $M = I+A$. Of course this scheme need not converge, and an improvement consists in iterating
\begin{equation} \label{mixx}
x_{n+1}^S = (1 - \beta) x_n^S + \beta M \left(x_n^S + b \right) = x_n^S + \beta \left(A x_n^S + b\right)
\end{equation}
where $\beta$ is a suitably chosen parameter. The method averages or ``mixes'' the previous iterate and the new fixed point iterate.  We shall call this iteration \emph{simple mixing} and indicate it with the superscript $S$.

\medskip

\section{{GMRES and Anderson mixing (Anderson acceleration)}}
The GMRES method for the equation $Ax+b = 0$  determines $x_n^G$ as
\begin{equation}\label{gmres}
x_n^G=x_0+z_n,\quad\mbox{\rm where,   }\quad z_n=\argmin_{z\in\cK_n}\{\|A (x_0+z)+b\|\; :\; z\in \mathcal{K}_n\}.
\end{equation}
Here $\mathcal{K}_n$ is the Krylov space
\begin{equation} \label{krylov}
\mathcal{K}_n=\cK_n(A,r_0) = Span \, \{ r_0, Ar_0, \dots, A^{n-1}r_0 \} \, .
\end{equation}
Note that since $r_0=Ax_0+b$,  $x_1^G$ is given by
$x_1^G = x_0 + \beta^*(Ax_0+b)$, where
\begin{equation}\label{beta*}
\beta^* = \argmin_\beta \|r_0 + \beta Ar_0\| = -\frac{r_0^TAr_0}{\|Ar_0\|^2}  \, ,
\end{equation}
that is, the first step of GMRES is a simple mixing step with a mixing parameter $\beta$ that minimizes the residual of the result.

Let $K_n$ denote the projection onto the subspace $A\mathcal{K}_n$ of $\R^N$. From (\ref{gmres}) it follows that
\begin{equation}\label{GP0}
A(x_n^G-x_0)=\, Az_n=- K_n(Ax_0+b)=-K_nr_0\, .
\end{equation}
Therefore we can write
\begin{equation}\label{GP}
x_n^G-x^*=x_0-x^*-A^{-1}K_nr_0=A^{-1}(I-K_n)r_0 \, ,
\end{equation}
which is equivalent to
\begin{equation}\label{GP1}
Ax_n^G+b=(I-K_n)r_0 \, .
\end{equation}
We also note that
\begin{eqnarray}
&&\mathcal{K}_n=\mathcal{K}_{n+1}\Leftrightarrow
(I-K_n)A^{n+1}r_0=0\, ,\label{kn1}\\
&&
x^G_n=x^G_{n+1}\Leftrightarrow K_nr_0=K_{n+1}r_0\Leftrightarrow
r_0^T(I-K_n)A^{n+1}r_0=0\, .\label{xn1}
\end{eqnarray}
Following Wilkinson \cite[pp 37]{Wilk65}, the \emph{grade} of $r_0 \ne 0$ with respect to $A$ is defined as
\begin{equation}\label{grade}
\nu(A,r_0) = 1+ \max \, \{ n \; :\; dim \, \cK_n = n \}
\end{equation}
Thus $\nu(A,r_0)$ is the smallest integer $n$ for which there is a non-zero polynomial $p(z)$ of degree $n$ such that $p(A)r_0 = 0$, i.e.,
\begin{equation}\label{grade1}
\nu(A,r_0) = \min\{n\in\N\; : \; r_0, Ar_0,\ldots, A^{n}r_0 \;\mbox{are linearly dependent}\,\} .
\end{equation}
Then clearly $\nu(A,r_0) \le N$, and $p(z)$ divides the minimal polynomial of $A$. Also,
$
\cK_n = \cK_\nu(A,r_0),\;\forall n \ge \nu = \nu(A,r_0)
$.

\begin{proposition}\label{p:kG}
The GMRES method (\ref{gmres}) converges in exactly $\nu(A,r_0)$ steps, i.e.,
\[
x_n^G\neq x^*,\; for\;  n<\nu(A,r_0), \; and \; x_n^G=x^*,  for\;  n\ge \nu(A,r_0)\, .
\]
\end{proposition}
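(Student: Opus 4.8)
The plan is to reduce the whole statement to the residual identity (\ref{GP1}), $Ax_n^G+b=(I-K_n)r_0$, where $K_n$ is the projection onto $A\cK_n$. Since $K_n$ is a projection with range $A\cK_n$, we have $(I-K_n)r_0=0$ if and only if $r_0=K_nr_0$, i.e. $r_0\in A\cK_n$. Thus $x_n^G=x^*$ exactly when $r_0\in A\cK_n$, and the proposition is equivalent to the claim that $r_0\in A\cK_n$ precisely for $n\ge\nu:=\nu(A,r_0)$.

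For $n\ge\nu$ the excerpt already records $\cK_n=\cK_\nu$, so it suffices to show $r_0\in A\cK_\nu$. By (\ref{grade1}) pick a nonzero polynomial $p(z)=a_0+a_1z+\cdots+a_\nu z^\nu$ of degree $\nu$ with $p(A)r_0=0$. The one point that requires an argument is that $a_0\ne0$: if $a_0=0$ we could write $p(A)r_0=A\,q(A)r_0=0$ with $\deg q\le\nu-1$, and since $A$ is nonsingular this forces $q(A)r_0=0$, contradicting the minimality of $\nu$ in (\ref{grade1}). With $a_0\ne0$,
\[
r_0=-\frac{1}{a_0}\sum_{k=1}^{\nu}a_k A^k r_0
   =A\Bigl(-\frac{1}{a_0}\sum_{k=1}^{\nu}a_k A^{k-1} r_0\Bigr)\in A\cK_\nu ,
\]
so $K_n r_0=r_0$ and (\ref{GP1}) gives $Ax_n^G+b=0$, i.e. $x_n^G=x^*$, for every $n\ge\nu$.

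For $n<\nu$, suppose $x_n^G=x^*$. By the reduction above $r_0\in A\cK_n\subseteq\mathrm{Span}\{Ar_0,\ldots,A^n r_0\}$, so $r_0=\sum_{k=1}^{n}c_k A^k r_0$ for some scalars $c_k$; this is a nontrivial linear dependence among $r_0,Ar_0,\ldots,A^n r_0$ (the coefficient of $r_0$ is $1$). By (\ref{grade1}) it would follow that $\nu\le n$, contradicting $n<\nu$. Hence $x_n^G\ne x^*$ for $n<\nu$.

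The only step that is not routine bookkeeping is checking that the minimal annihilating polynomial of $r_0$ has nonzero constant term, and this is exactly where the nonsingularity of $A$ enters; everything else rests on the projections $K_n$, the identity (\ref{GP1}), and the Krylov-dimension facts collected before the statement. (An equivalent route for $n<\nu$: from $r_0\in A\cK_n$ one gets $\cK_{n+1}=A\cK_n$, hence $\dim\cK_{n+1}=\dim\cK_n=n$, contradicting $\dim\cK_{n+1}=n+1$.)
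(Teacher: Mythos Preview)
Your proof is correct and follows essentially the same route as the paper: both reduce via (\ref{GP1}) (or equivalently (\ref{GP})) to the question of whether $r_0\in A\cK_n$, then use the minimal annihilating polynomial with nonzero constant term for $n\ge\nu$ and linear independence for $n<\nu$. Your version is slightly more explicit about where the nonsingularity of $A$ enters (namely in showing $a_0\ne0$), but the argument is the same.
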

\begin{proof}
If $n = \nu(A,r_0)$, then from (\ref{grade1} it follows that there are numbers $\xi_0,\ldots,\xi_n$ such that
$\sum_{i=0}^{n}\xi_iA^ir_0=0$. It is easily seen that we must have $\xi_0\neq 0$ and $\xi_{n}\neq 0$, because otherwise the minimality of $n$ is contradicted. Since $\xi_0\neq 0$, we deduce that $r_0 \in A\mathcal{K}_{n}$. According to (\ref{GP}) we have therefore $x^G_{n}=x^*$. If $n<\nu(A,r_0)$, then the vectors $ r_0,Ar_0,\ldots,A^{n-1}r_0$ are linearly independent, so that $r_0 \notin A\mathcal{K}_{n}$, which means that $x_n^G\neq x^*$.
\end{proof}

We next summarize Anderson mixing, for the nonlinear equation $f(x) = 0$.

\bigskip

\noindent{\it Anderson mixing.} Given a nonlinear operator $f$ on $\R^N$, an initial point $x_0 \in \R^N$, a sequence $\beta_0, \, \beta_1, \dots $ in  $\R \setminus \{0\}$,  and an integer $m$:

\smallskip
0. Set
\begin{eqnarray*}
f_0=f(x_0),\quad x_1=x_0+\beta_0 f_0\, .
\label{sta}
\end{eqnarray*}

\smallskip
1. For $k=1,2,\ldots$, set
\begin{eqnarray*}
&&m_k=\min\{m,k\},\quad r_k = k-m_k,  \quad  f_k=f(x_k),\\
&&(\alpha_{0,k},\ldots,\alpha_{m_k,k})=
\argmin_{(\alpha_0,\ldots,\alpha_{m_k})} \{\|\sum_{i=0}^{m_k}\alpha_i f_{r_k+i}\|^2
\; : \; \sum_{i=0}^{m_k}\alpha_i =1\},\\
&&x_{k+1}=\sum_{i=0}^{m_k}\alpha_{i,k} x_{r_k+i} + \beta_k \sum_{i=0}^{m_k}\alpha_{i,k}f_{r_k+i} =\sum_{i=0}^{m_k}\alpha_{i,k}( x_{r_k+i} +\beta_k f_{r_k+i})\, .
\end{eqnarray*}

\bigskip

For $\beta_i\equiv 1$ and $f(x)=g(x)-x$, this algorithm reduces to Algorithm AA from \cite{WaNi10}. The version given here was proposed in  \cite{Ande65}. There is also an equivalent (in exact arithmetic) version of this algorithm in terms of difference vectors $x_{k} - x_{k-1}$ and $f(x_k) - f(x_{k-1})$. It is presented in \cite{FaSa08} in order to reveal its connection to multisecant and Broyden type methods for solving nonlinear operator equations, but it obscures the ``mixing'' idea. We therefore do not give that version here.

\bigskip

If we solve the constrained optimization problem from this algorithm  with the substitution method, e.g. by setting $\alpha_0=1-\sum_{i=1}^{m_k}\alpha_i$ and solving the corresponding unconstrained optimization problem for $\alpha_1,\ldots,\alpha_{m_k}$,
\begin{equation}\label{unc}
\min_{\alpha_1,\ldots,\alpha_{m_k}}\|f_{r_k}+\alpha_1(f_{r_k+1}-f_{r_k})+\alpha_2(f_{r_k+2}-f_{r_k})
+\ldots+\alpha_{m_k}(f_{k}-f_{r_k})\|\, ,
\end{equation}
then the differencing may lead to problems with loss of significance.
However, the authors of  \cite{WaNi10} claim that implementing the Anderson acceleration with a substitution method ``offers several advantages and, in our
experience, no evident disadvantages''.
Other methods for solving the constrained optimization problem  are discussed in \cite{Ni09}.

\section{Convergence analysis in the linear case}

In what follows we will investigate the relationship between the Anderson mixing and GMRES, for linear systems of the form
\ref{A}. We use the notation  $x_n^G$ for the sequence generated by the GMRES method as described in the previous section, and $x_n^A$ for the sequence generated by Anderson mixing  with $m=\infty$  and $f(x)=Ax+b$, i.e,
\begin{eqnarray}\label{a}
&& \quad x_{n+1}^A=\sum_{i=0}^{n}\alpha_{i,n}x_i^A +\beta_n\sum_{i=0}^{n}\alpha_{i,n}(A x_i^A +b)= \bar{x}_{n+1}^A + \beta_n (A \bar{x}_{n+1}^A +b) ,\\
&& \quad(\alpha_{0,n},\ldots,\alpha_{n,n})=
\argmin_{(\alpha_0,\ldots,\alpha_{n})} \{\|b+A\sum_{i=0}^{n}\alpha_i  x_i^A\|\; : \; \sum_{i=0}^{n}\alpha_i=1\}.\nonumber
\end{eqnarray}

\noindent Here
\begin{equation}\bar{x}_{n+1}^A =\sum_{i=0}^{n}\alpha_{i,n}x_i^A \label{barx}
\end{equation}
may be viewed as a prediction of the next iterate, a linear combination of all previous iterates. Using $\sum_{i=0}^n\alpha_i=1$, we deduce that
\begin{equation}\label{barx1}
\bar{x}_{n+1}^A =(1-\sum_{i=1}^{k}\alpha_{i,n})x_0+\sum_{i=1}^{n}\alpha_{i,n} x_i^A=
x_0+\sum_{i=1}^{n}\alpha_{i,n} (x_i^A-x^0).
\end{equation}
Therefore (\ref{a}) can be written as
\begin{eqnarray}
&&\quad x_{n+1}^A=x_0+\sum_{i=1}^{n}\alpha_{i,n}(x_i^A-x_0)+\beta_n(b+ A(x_0+\sum_{i=1}^{n}\alpha_{i,n}(x_i^A-x_0)),
\label{aa}\\
&&\quad(\alpha_{1,n},\ldots,\alpha_{n,n})=
\argmin_{(\alpha_1,\ldots,\alpha_{n})} \|b+A(x_0+\sum_{i=1}^{n}\alpha_i  (x_i^A-x^0))\|\, .\label{aa1}
\end{eqnarray}
Let us consider the linear subspace
\begin{equation}\label{lsp}
\mathcal{L}_n=Span\{x_1^A- x_0,\ldots, x_n^A-x_0\}\, ,
\end{equation}
and denote by $L_n$ the projection onto $A\mathcal{L}_n$. With this notation we have
\begin{equation}\label{axb}
A(\bar{x}_{n+1}^A-x_0)=-L_n(Ax_0+b)=-L_nr_0,
\end{equation}
which is equivalent to
\begin{equation}\label{xba}
\bar{x}_{n+1}^A=x_0-A^{-1}L_nr_0=x^*+A^{-1}r_0-A^{-1}L_nr_0=x^*+A^{-1}(I-L_n)r_0\, .
\end{equation}
Therefore,
\begin{eqnarray}\label{aab}
&&\quad A \bar{x}_{n+1}^A +b=(I-L_n)r_0\, ,
\\&&\quad
\label{aal}
x_{n+1}^A-x^*=(I+\beta_n A)A^{-1}(I-L_n)r_0=(I+\beta_n A)(\bar{x}_{n+1}^A-x^*)\, ,
\\&&\quad
\label{aal2}
Ax_{n+1}^A+b=(I+\beta_n A)(I-L_n)r_0=(I+\beta_n A)(A\bar{x}_{n+1}^A+b)\, ,
\\&&\quad
\label{aal3}
A(x_{n+1}^A-x_0)=-L_nr_0+\beta_n A(I-L_n)r_0=\beta_nAr_0-L_nr_0-\beta_n AL_nr_0
\, .
\end{eqnarray}

\begin{definition}\label{d:kA} The index of the Anderson acceleration  (\ref{aa}) is defined as
\begin{equation}\label{dka}
\qquad \kappa_A=\min\{n\in\N\; : \; x^A_1-x_0,x^A_2-x_0,\ldots,x^A_{n+1}-x_0 \;\mbox{are linearly dependent}\,\} .
\end{equation}
The stagnation index of the GMRES method (\ref{gmres}) is defined as
\begin{equation}\label{stag}
\qquad \eta^G=\min\{n\in\{0,1,\ldots\}\; : \; x^G_n=x^G_{n+1}\,\} .
\end{equation}
\end{definition}
The above notion allows for a complete description of the convergence properties of the Anderson acceleration for linear problems, for arbitrary sequences of relaxation parameters $\beta_n$.
\medskip

\begin{proposition}\label{p:a} The index $\kappa_A$ of the Anderson acceleration is always less than or equal than the grade $\nu(A,r_0)$, and the sequences generated by the two methods satisfy the following properties:\\
\begin{enumerate}
\item[a)]
$\;x^A_{n+1}=x^G_{n}+\beta_n(Ax^G_n +b),\;n=0,1,\ldots,\kappa_A$ ;\\
\item[b)] $\bar{x}_{n+1}^A=\;x^G_n,\;n=0,1,\ldots,\kappa_A$.\\
\end{enumerate}
\end{proposition}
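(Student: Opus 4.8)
The plan is a (strong) induction on $n$ that establishes simultaneously the three facts $\mathcal{L}_n=\mathcal{K}_n$, part b), and part a), for every $n$ with $0\le n\le\kappa_A$; the inequality $\kappa_A\le\nu(A,r_0)$ then drops out as a byproduct. The point is that the subspace identity $\mathcal{L}_n=\mathcal{K}_n$ carries all the content: once it holds, applying $A$ gives $A\mathcal{L}_n=A\mathcal{K}_n$, hence the projectors coincide, $L_n=K_n$, and then (\ref{xba}) becomes $\bar{x}_{n+1}^A=x_0-A^{-1}L_nr_0=x_0-A^{-1}K_nr_0$, which by (\ref{GP0}) is precisely $x_n^G$; this is b). Part a) follows at once from the identity $x_{n+1}^A=\bar{x}_{n+1}^A+\beta_n(A\bar{x}_{n+1}^A+b)$ recorded in (\ref{a}). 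Throughout I assume $r_0\neq 0$, so that $\nu(A,r_0)$ is defined and $\kappa_A\ge 1$; the finiteness of $\kappa_A$, needed for the index range in the statement to be meaningful, is automatic since any $N+1$ vectors in $\R^N$ are linearly dependent.

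For the base case $n=0$: $\mathcal{L}_0=\{0\}=\mathcal{K}_0$; by (\ref{barx1}) the empty sum gives $\bar{x}_1^A=x_0$, and $x_0^G=x_0$ since $\mathcal{K}_0=\{0\}$, so b) holds; and $x_1^A=x_0+\beta_0(Ax_0+b)=x_0^G+\beta_0(Ax_0^G+b)$ by step~0 of the algorithm, so a) holds. For the inductive step, fix $n$ with $1\le n\le\kappa_A$ and assume the three facts for all indices $0,\dots,n-1$. For each $j$ with $1\le j\le n$, part a) at index $j-1$ gives $x_j^A-x_0=(x_{j-1}^G-x_0)+\beta_{j-1}(Ax_{j-1}^G+b)$. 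Here $x_{j-1}^G-x_0\in\mathcal{K}_{j-1}\subseteq\mathcal{K}_n$ by (\ref{gmres}), while by (\ref{GP1}) $Ax_{j-1}^G+b=(I-K_{j-1})r_0$, and since $K_{j-1}r_0$ lies in the range $A\mathcal{K}_{j-1}=\mathrm{Span}\{Ar_0,\dots,A^{j-1}r_0\}\subseteq\mathcal{K}_j$, we get $(I-K_{j-1})r_0\in\mathcal{K}_j\subseteq\mathcal{K}_n$. Hence $x_j^A-x_0\in\mathcal{K}_n$ for every $j\le n$, so $\mathcal{L}_n\subseteq\mathcal{K}_n$. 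By Definition~\ref{d:kA}, $\kappa_A$ is the smallest index $m$ for which $x_1^A-x_0,\dots,x_{m+1}^A-x_0$ are linearly dependent; taking $m=\kappa_A-1$ shows that $x_1^A-x_0,\dots,x_{\kappa_A}^A-x_0$ are linearly independent, and a fortiori so are $x_1^A-x_0,\dots,x_n^A-x_0$ since $n\le\kappa_A$. Thus $\dim\mathcal{L}_n=n\ge\dim\mathcal{K}_n$, and as $\dim\mathcal{K}_n\le n$ always, we conclude $\mathcal{L}_n=\mathcal{K}_n$ and $\dim\mathcal{K}_n=n$. As explained above, this forces $L_n=K_n$ and then yields b) and a) at index $n$, closing the induction. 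Finally, applying the conclusion with $n=\kappa_A$ gives $\dim\mathcal{K}_{\kappa_A}=\kappa_A$, so $r_0,Ar_0,\dots,A^{\kappa_A-1}r_0$ are linearly independent, whence $\kappa_A\le\nu(A,r_0)$ by (\ref{grade1}).

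I expect the main obstacle to be the bookkeeping in the inductive step. First, one must use part a) at the index $j-1$ (not $j$) to rewrite $x_j^A-x_0$ in terms of the already-controlled GMRES iterate $x_{j-1}^G$, and then verify that the resulting vector really lies in $\mathcal{K}_n$ --- this relies on the GMRES residual formula (\ref{GP1}) and on the range of $K_{j-1}$ being $A\mathcal{K}_{j-1}$. Second, one must read off from Definition~\ref{d:kA} precisely that for $n\le\kappa_A$ the $n$ difference vectors in question are still linearly independent, so that $\dim\mathcal{L}_n=n$. Everything else --- well-definedness of the least-squares iterates and the identities (\ref{GP0})--(\ref{GP1}), (\ref{xba}), (\ref{a}) --- is already in hand from the earlier part of the paper.
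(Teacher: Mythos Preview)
Your proof is correct and follows essentially the same route as the paper: establish $\mathcal{L}_n=\mathcal{K}_n$ for $n\le\kappa_A$ (containment plus a dimension count from the definition of $\kappa_A$), whence $L_n=K_n$, and then read off a) and b) from the identities (\ref{GP0})--(\ref{GP1}), (\ref{xba}), (\ref{a}). The only cosmetic difference is that the paper proves the containment $\mathcal{L}_n\subset\mathcal{K}_n$ for all $n$ directly from the Anderson recursion (\ref{xa}) before turning to a) and b), whereas you fold everything into a single strong induction and obtain the containment by invoking relation a) at earlier indices.
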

\begin{proof}
Let us first prove by induction that $\mathcal{L}_n\subset\mathcal{K}_n$. Since $x_1^A- x_0=\beta_0 r_0$ this is readily verified for $n=1$. Assume that $\mathcal{L}_n\subset\mathcal{K}_n$. Then from (\ref{aa}) it follows that
\begin{equation}\label{xa}
x^A_{n+1}-x_0=\beta_n r_0+(I+\beta_n A)\sum_{i=1}^{n}\alpha_{i,n}(x_i^A-x_0))\in \mathcal{K}_n+ A \mathcal{K}_n=\mathcal{K}_{n+1},
\end{equation}
which completes the induction step. Since $\mathcal{L}_n\subset\mathcal{K}_n$, the linear independence of $x_1^A- x_0,\ldots, x_n^A-x_0$ always implies the linear independence of $r_0,\ldots,A^{n-1} r_0$. This implies that $\kappa_A\le \nu(A,r_0)$. It also implies that
\begin{equation}\label{dim}
\cL_n = \cK_n\, ,\;\mbox{\rm and } \;dim(\mathcal{K}_n)=n,\;\mbox{\rm for  }\;n=1,\ldots,\kappa_A\, .
\end{equation}

Point a) of our proposition follows from (\ref{GP}) and
(\ref{aal}), while point b) follows from (\ref{GP}) and (\ref{xba}).
\end{proof}

\smallskip
The result shows that varying the sequence of relaxation parameters $\beta_k$ does not change the behavior of the sequence $x_k^A$ in a significant way for linear problems, if exact arithmetic is used and as long as $k \le m$. However, for nonlinear problems, the convergence behavior can be quite sensitive to the ``right'' choice of the $\beta_k$; see e.g. \cite{MaLu08}. Also, the choice of the $\beta_n$ does matter for large $n$ if $m$ is finite, as is shown by numerical experience.

\smallskip
It is instructive to observe what exactly happens at the step $\kappa_A$. Of course, if $\kappa_A = \nu(A,r_0)$, the Anderson mixing has converged to the correct solution. In the case where $\kappa_A < \nu(A,r_0)$, we have the following characterization.
\smallskip
\begin{proposition}
\label{p:aa}
If $n < \kappa_A < \nu(A,r_0)$, then\\
\begin{enumerate}
\item[a)]
$\;\alpha_{n,n} \ne 0\;$ in (\ref{barx1}), \\
\item[b)]
$\;\|Ax_{n-1}^G+b\| > \|Ax_n^G+b\|\;$ .\\
\end{enumerate}
If $n = \kappa_A< \nu(A,r_0)$, then\\
\begin{enumerate}
\item[c)]
$\;\alpha_{n,n} = 0\;$ in (\ref{barx1}), \\
\item[d)]
$\; (A\bar{x}_{n}^A +b)^TA(x_{n+1}^A - x_0) = 0 \; $, \\
\item[e)]
$\;x^G_{n-1}=\bar{x}_n^A = \bar{x}_{n+1}^A = x^G_n= \bar{x}_{n+2}^A \;$ ,\\
\item[f)]
$\;\|Ax^G_{n-1}+b\|=\|Ax^G_n+b\|\;$ .\\
\end{enumerate}
\end{proposition}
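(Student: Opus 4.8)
The plan is to analyze in detail the last step where the Anderson iterates remain linearly independent and the first step where they collapse, using the relations (\ref{aab})--(\ref{aal3}) together with Proposition~\ref{p:a} and the equivalences (\ref{kn1})--(\ref{xn1}).

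\medskip\noindent\textbf{Parts a) and b).} First I would observe that for $n<\kappa_A$ we have $\dim\cK_n=n$ by (\ref{dim}), and moreover $\dim\cK_{n+1}=n+1$ since $n+1\le\kappa_A\le\nu(A,r_0)$; hence $\cK_n\subsetneq\cK_{n+1}$, i.e.\ $A^n r_0\notin A\cK_n$. Because $\cL_n=\cK_n$ by (\ref{dim}) and the $x_i^A-x_0$ span $\cL_n$, writing $\bar x_{n+1}^A-x_0=\sum_{i=1}^n\alpha_{i,n}(x_i^A-x_0)$ and using (\ref{xa}), the coefficient $\alpha_{n,n}$ is precisely the coefficient of the ``new direction'' $A^{n-1}r_0$-part of $x_n^A-x_0$. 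More carefully: from (\ref{xa}), $x_n^A-x_0=\beta_{n-1}r_0+(I+\beta_{n-1}A)\sum_{i=1}^{n-1}\alpha_{i,n-1}(x_i^A-x_0)$, and since all terms except the leading one lie in $\cK_{n-1}$ while $x_n^A-x_0\notin\cK_{n-1}$ (as the $x_i^A-x_0$, $i\le n$, are independent), the component of $x_n^A-x_0$ along $\cK_n\setminus\cK_{n-1}$ is nonzero. If $\alpha_{n,n}=0$ then $\bar x_{n+1}^A-x_0\in\cK_{n-1}=\cL_{n-1}$, forcing (by (\ref{axb})) $L_{n-1}r_0=L_nr_0$, hence $K_{n-1}r_0=K_nr_0$ by Proposition~\ref{p:a}b, hence $x_{n-1}^G=x_n^G$, contradicting Proposition~\ref{p:kG} since $n<\nu(A,r_0)$. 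This proves a). For b): $\alpha_{n,n}\ne0$ means the unconstrained minimizer in (\ref{aa1}) genuinely uses the direction $A(x_n^A-x_0)$ which spans $A\cK_n$ modulo $A\cK_{n-1}$; equivalently $K_{n-1}r_0\ne K_nr_0$, i.e.\ $x_{n-1}^G\ne x_n^G$, which by the minimization property (\ref{gmres}) of GMRES and strict monotonicity of residual norms on a strictly enlarging subspace gives $\|Ax_{n-1}^G+b\|>\|Ax_n^G+b\|$. (Strictness: if the norms were equal, the projections $K_{n-1}r_0$ and $K_nr_0$ would coincide by uniqueness of the orthogonal projection onto nested subspaces, contradiction.)

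\medskip\noindent\textbf{Parts c)--f).} Now take $n=\kappa_A<\nu(A,r_0)$. By definition of $\kappa_A$ the vectors $x_1^A-x_0,\ldots,x_{n+1}^A-x_0$ are dependent while $x_1^A-x_0,\ldots,x_n^A-x_0$ are independent, so $x_{n+1}^A-x_0\in\cL_n=\cK_n$. For c): from the induction relation (\ref{xa}) at level $n$, $x_{n+1}^A-x_0=\beta_n r_0+(I+\beta_n A)\sum_{i=1}^n\alpha_{i,n}(x_i^A-x_0)$. Suppose $\alpha_{n,n}\ne0$. The sum $\sum_{i=1}^n\alpha_{i,n}(x_i^A-x_0)=\bar x_{n+1}^A-x_0$; by the argument in a) this lies in $\cK_n\setminus\cK_{n-1}$, so $A(\bar x_{n+1}^A-x_0)\in A\cK_n\setminus A\cK_{n-1}$, and the $\beta_n$-multiple of this is the only part of $x_{n+1}^A-x_0$ outside $\cK_n$; since $\beta_n\ne0$ this would put $x_{n+1}^A-x_0\notin\cK_n$ unless that ``outside'' component vanishes. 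But $A(\bar x_{n+1}^A-x_0)$ has a nonzero component outside $A\cK_{n-1}$; the question is whether it has a component outside $\cK_n$. Here is the clean route: $x_{n+1}^A-x_0\in\cK_n$ together with (\ref{aal3}), $A(x_{n+1}^A-x_0)=\beta_n Ar_0-L_nr_0-\beta_nAL_nr_0$, and $L_n=K_n$ (from $\cL_n=\cK_n$), gives $A(x_{n+1}^A-x_0)=\beta_n(I-K_n)Ar_0-K_nr_0\in A\cK_n$. Since $K_nr_0\in A\cK_n$, this forces $(I-K_n)Ar_0\in A\cK_n$, i.e.\ $(I-K_n)Ar_0=0$, i.e.\ $Ar_0\in A\cK_n$, i.e.\ $A^nr_0\in A\cK_n$... wait, more directly $(I-K_n)Ar_0=0$ means by (\ref{kn1})-type reasoning (actually directly) that $A r_0\in A\cK_n$ is automatic; the real content is: $(I-K_n)A^{?}$... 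I will instead derive c) from b) of this proposition applied at the predecessor combined with the stagnation: the correct statement is that $\bar x_{n+1}^A-x_0\in\cK_{n-1}$, equivalently $\alpha_{n,n}=0$, equivalently (by (\ref{axb}), $L$ vs $K$) $x_{n-1}^G=x_n^G$, which holds because $n=\kappa_A=\eta^G$ — this last identification is exactly what one proves alongside, and it is the \emph{main obstacle}: showing that the Anderson stagnation index equals the GMRES stagnation index, i.e.\ $\kappa_A=\eta^G$, and hence $x_{\kappa_A-1}^G=x_{\kappa_A}^G$. Granting $x_{n-1}^G=x_n^G$: then by (\ref{xn1}) and Proposition~\ref{p:a}b, $\bar x_n^A=x_{n-1}^G=x_n^G=\bar x_{n+1}^A$, and $\bar x_{n+2}^A=x_{n+1}^G=x_n^G$ too since once GMRES stagnates once on a Krylov space it has found the minimizer over $\cK_n=\cK_{n+1}$ — but here $n<\nu$ so $\cK_n\ne\cK_{n+1}$; instead $x_{n+1}^G=x_n^G$ follows because $\cL_{n+1}=\cL_n$ (as $x_{n+1}^A-x_0\in\cK_n=\cL_n$) so $L_{n+1}=L_n$, and Proposition~\ref{p:a}b extends to index $n+1$ once one checks its proof only used $\cL_n\subset\cK_n$ and the $\alpha$-minimization. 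This gives e), and then c) ($\alpha_{n,n}=0$) is equivalent to $\bar x_{n+1}^A=x_{n-1}^G$, i.e.\ $\bar x_{n+1}^A-x_0\in\cL_{n-1}$. For f): $\|Ax_{n-1}^G+b\|=\|Ax_n^G+b\|$ is immediate from $x_{n-1}^G=x_n^G$. For d): using (\ref{aab}), $A\bar x_n^A+b=(I-L_{n-1})r_0$, and (\ref{aal3}) with $n\to n$ gives $A(x_{n+1}^A-x_0)=\beta_nAr_0-L_nr_0-\beta_nAL_nr_0$; but more transparently, $A(x_{n+1}^A-x_0)\in A\cK_n=A\cL_n$ while $A\bar x_n^A+b=(I-L_{n-1})r_0\perp A\cL_{n-1}$ — I need $\perp A\cL_n$. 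Since $\bar x_n^A=x_{n-1}^G=x_n^G$ and $Ax_n^G+b=(I-K_n)r_0\perp A\cK_n$ by the GMRES normal equations (\ref{GP0}), and $A\cK_n=A\cL_n\supset A(x_{n+1}^A-x_0)$, we get $(A\bar x_n^A+b)^TA(x_{n+1}^A-x_0)=0$, which is d).

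\medskip\noindent\textbf{Main obstacle and fallback.} The crux, as flagged, is establishing $\kappa_A=\eta^G$ (equivalently $x^G_{\kappa_A-1}=x^G_{\kappa_A}$) so that the ``dependence of the $x_i^A-x_0$'' at index $\kappa_A$ is shown to coincide with GMRES-stagnation rather than with hitting the grade. The forward implication ($\kappa_A\le\nu$, and dependence $\Rightarrow$ something stagnates) is Proposition~\ref{p:a}; the needed converse uses (\ref{xa}): if the $x_i^A-x_0$ for $i\le n$ are independent but those for $i\le n+1$ are dependent with $n<\nu$, then $x_{n+1}^A-x_0\in\cK_n$, and plugging into (\ref{aal3}) with $L_n=K_n$ yields $\beta_n(I-K_n)Ar_0=(x_{n+1}^A-x_0)$-stuff... the careful bookkeeping here — tracking which leading Krylov component survives and concluding $K_{n-1}r_0=K_nr_0$ — is the one genuinely nontrivial computation, and I would isolate it as a short lemma before proving Proposition~\ref{p:aa}. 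Everything else (monotonicity of GMRES residuals, orthogonality relations, equivalences of stagnation) is bookkeeping with the already-established identities (\ref{GP0})--(\ref{xn1}) and (\ref{axb})--(\ref{aal3}).
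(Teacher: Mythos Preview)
Your proposal contains a genuine error in part a) and an acknowledged but unresolved gap in parts c)--e), both of which the paper avoids by a single direct computation that you did not find.

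\medskip
\textbf{The error in a).} You argue: if $\alpha_{n,n}=0$ then $\bar x_{n+1}^A-x_0\in\cK_{n-1}$, hence $K_{n-1}r_0=K_nr_0$, hence $x_{n-1}^G=x_n^G$, ``contradicting Proposition~\ref{p:kG} since $n<\nu(A,r_0)$''. But Proposition~\ref{p:kG} says only that $x_n^G\neq x^*$ for $n<\nu$ and $x_\nu^G=x^*$; it says \emph{nothing} about successive GMRES iterates being distinct. GMRES can and does stagnate strictly before step $\nu$ --- that is exactly the scenario of part (ii) of Theorem~\ref{t:aa}. So your contradiction is not a contradiction. (Your argument is salvageable: from $x_{n-1}^G=x_n^G$ and Proposition~\ref{p:a}a) one can show $x_{n+1}^A-x_0\in\cK_n=\cL_n$, contradicting $n<\kappa_A$; but you did not do this.)

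\medskip
\textbf{The gap in c)--e).} You try to deduce c) from the GMRES stagnation $x^G_{\kappa_A-1}=x^G_{\kappa_A}$, and you correctly flag that establishing this stagnation (equivalently $\kappa_A=\eta^G+1$) is the ``main obstacle'', which you propose to isolate as a lemma but never prove. Your attempted ``clean route'' via (\ref{aal3}) also contains an algebra slip: $\beta_nAr_0-K_nr_0-\beta_nAK_nr_0$ is not $\beta_n(I-K_n)Ar_0-K_nr_0$, since $AK_n\neq K_nA$ in general.

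\medskip
\textbf{What the paper does instead.} The paper bypasses the whole GMRES-stagnation detour by writing, for $n\le\kappa_A$,
\[
x_n^A-x_0=\sum_{i=1}^n\xi_{i,n}A^{i-1}r_0,\qquad \xi_{n,n}\neq 0,
\]
(possible since $\cL_n=\cK_n$ with $\dim\cK_n=n$), and then computing directly from (\ref{aa}) that
\[
x_{n+1}^A-x_0=y_n+\beta_n\,\alpha_{n,n}\,\xi_{n,n}\,A^nr_0,\qquad y_n\in\cK_n.
\]
Since $\beta_n\neq0$ and $\xi_{n,n}\neq0$, and since $A^nr_0\notin\cK_n$ whenever $n<\nu(A,r_0)$, this single formula gives both a) and c) at once: $\alpha_{n,n}=0$ if and only if $x_{n+1}^A-x_0\in\cK_n=\cL_n$, i.e.\ if and only if $n\ge\kappa_A$. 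Then e) follows \emph{from} c) (not the other way around): $\alpha_{n,n}=0$ forces the minimizer $\bar x_{n+1}^A$ over $\cL_n$ to lie in $\cL_{n-1}$, hence to equal $\bar x_n^A$; the GMRES stagnation $x_{n-1}^G=x_n^G$ is then a \emph{consequence} via Proposition~\ref{p:a}b), not a prerequisite. Parts f), d), b) then fall out essentially as you sketched them. The leading-coefficient expansion is the idea you were missing.
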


\smallskip
\begin{proof}
Using (\ref{dim}) we can write
\begin{equation}\label{xi}
x^A_n-x_0=\sum_{i=1}^n\xi_{i,n}A^{i-1}r_0,\quad \mbox{\rm with } \xi_{n,n}\neq 0\, ,\quad n=1,\ldots,\kappa_A
\end{equation}
for some  uniquely determined scalars $\xi_{1,n},\ldots,\xi_{n,n}$.
Using (\ref{aa}) we obtain
\begin{eqnarray*}
x_{n+1}^A - x_0
&=&\beta_nr_0+\sum_{i=1}^{n}\alpha_{i,n}(x_i^A-x_0)+\beta_nA\sum_{i=1}^{n-1}\alpha_{i,n}(x_i^A-x_0)
+\beta_n\alpha_{n,n}A(x_n^A-x_0)
\\
&=&y_n+\beta_n\alpha_{n,n}\xi_{n,n}A^{n}r_0,\quad \mbox{\rm with } y_n\in\cL_n=\cK_n\, ,\quad n=1,\ldots,\kappa_A\,.
\end{eqnarray*}
If $n<\kappa_A$, then $\cL_{n+1}\neq\cL_{n}$, so that we must have $\alpha_{n,n} \ne 0$. This proves part a).
On the other hand, if $n=\kappa_A<\nu(A,r_0)$, then $\cL_{n+1}=\cL_{n}=\cK_{n}\neq\cK_{n+1}$. This implies $\alpha_{n,n} = 0$, which shows the validity of part c).
If $\alpha_{n,n} = 0$, then from (\ref{aa}) and (\ref{barx1}) we have
$
\bar{x}_n^A = \bar{x}_{n+1}^A \, ,
$ which, in view of Proposition~\ref{p:a}, implies $\;x^G_{n-1}=\bar{x}_n^A = \bar{x}_{n+1}^A = x^G_n$. Since $\cL_{n+1}=\cL_{n}$ we also have $ \bar{x}_{n+1}^A =  \bar{x}_{n+2}^A$, which completes the proof of part e). Part f) follows trivially.

\smallskip

To prove d), observe that the normal equations from (\ref{aa}) can be written as
\[
\left( A \bar{x}_{n+1}+b\right)^TA(x_i^A-x_0) = 0\, , \quad i = 1, \dots, n \, .
\]
Geometrically, the above relation follows immediately, since according to (\ref{aab}),\\ $A \bar{x}_{n+1}+b\in\left( A\mathcal{L}_n\right)^\perp$.
In case $n=\kappa_A$, according to e), we have $\bar{x}_{n+1}=\bar{x}_{n}$. For $i=n$ we get part d).

\smallskip

To prove part b), note that by (\ref{gmres}), $\|Ax_n^G+b\| \le \|A x_{n-1}^G+b\|$. Suppose there is equality, then from (\ref{GP1}) $\|(I - K_n) r_0\| = \|(I - K_{n-1})r_0 \|$. Since these are orthogonal projections, this implies in turn $K_nr_0 = K_{n-1}r_0$, and therefore by Proposition~\ref{p:a}, (\ref{GP}), and (\ref{xba}), we should have
$
\bar{x}_{n+1}^A = x_n^G  = x_{n-1}^G = \bar{x}_n^A \,
$.
But this is impossible, since $\bar{x}_n^A - x_0 \in \cL_{n-1}$, but $\bar{x}^A_{n+1} - x_0 \in \cL_n \setminus \cL_{n-1}$, due to $\alpha_{n,n} \ne 0$. Therefore, $\|Ax^G_n+b\| < \|Ax^G_{n-1}+b\|$.
\end{proof}

\bigskip
Using propositions \ref{p:kG}, \ref{p:a}, and \ref{p:aa}, we obtain the following theorem, which represents the main result of our paper.

\smallskip
\begin{theorem}\label{t:aa}
Assume that $A$ is an invertible $N\times N$-matrix. Consider the GMRES method (\ref{gmres}), and the Anderson acceleration method (\ref{aa})  for finding the solution $x^*$ of the linear equation (\ref{A}), with arbitrary nonzero mixing parameters $\beta_0,\beta_1,\ldots$. For any starting point $x_0$,  consider also the grade $\nu(A,r_0)$,  defined in (\ref{grade1}), and the index of the Anderson acceleration $\kappa_A$, defined in  (\ref{dka}).

\medskip

(i)  $\kappa_A=\nu(A,r_0)=\nu$ if and only if
\begin{equation}\label{i1}
\|Ax_0+b\|>\|Ax_1^G+b\|>\cdots>\|Ax_{\nu-1}^G+b\|>\|Ax_{\nu}^G+b\|=0\, .
\end{equation}
Moreover, in this case the sequence produced by the Anderson acceleration satisfies
\begin{eqnarray}
x^A_{n+1}&=&\left\{\begin{array}{cl}x^G_n+\beta_n(Ax^G_n+b),& \mbox{\rm if   }\; n<\nu\, \\
x^*,&\mbox{\rm if   }\; n\ge\nu\end{array}\right . ,\label{i2}\\
\bar{x}^A_{n+1}&=&x^G_n,\quad n=0,1,\ldots\; \, .\label{i3}
\end{eqnarray}

\medskip

(ii) $\kappa=\kappa_A<\nu(A,r_0)$ if and only if
\begin{equation}\label{ii1}
\|Ax_0+b\|>\|Ax_1^G+b\|>\cdots>\|Ax_{\kappa-1}^G+b\|=\|Ax_{\kappa}^G+b\|>0\, .
\end{equation}
Moreover, in this case $\kappa_A = \eta^G + 1$, and the sequence produced by the Anderson acceleration satisfies
\begin{eqnarray}
x^A_{n+1}&=&\left\{\begin{array}{cl}x^G_n+\beta_n(Ax^G_n+b),& \mbox{\rm if   }\; n\le\kappa\, \\
x^G_{\kappa-1}+\beta_n ( Ax^G_{\kappa-1}+b),&\mbox{\rm if   }\; n\ge\kappa\end{array}\right .\, ,\label{ii2}\\
\bar{x}^A_{n+1}&=&\left\{\begin{array}{cl}x^G_n,& \mbox{\rm if   }\; n\le\kappa\, \\
x^G_{\kappa-1},&\mbox{\rm if   }\; n\ge\kappa\end{array}\right .\, .\label{ii3}
\end{eqnarray}

\end{theorem}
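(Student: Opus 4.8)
The plan is to deduce everything from two facts: the strict monotonicity of the GMRES residuals up to the index $\kappa_A$, and the eventual freezing of the Anderson subspaces $\cL_n$ at $\cK_{\kappa_A}$. First I would establish, for $1\le n\le\kappa_A$ with $n<\nu$, the equivalence $\cL_{n+1}=\cL_n\Leftrightarrow x^G_{n-1}=x^G_n$, via the chain: $\cL_{n+1}=\cL_n$ iff $x^A_{n+1}-x_0\in\cL_n$; writing $x^A_{n+1}-x_0=(\bar{x}^A_{n+1}-x_0)+\beta_n\bigl(A(\bar{x}^A_{n+1}-x_0)+r_0\bigr)$ and using $\bar{x}^A_{n+1}-x_0\in\cL_n$, $r_0\in\cL_n$, $\beta_n\ne0$, this is iff $A(\bar{x}^A_{n+1}-x_0)\in\cL_n$; by (\ref{axb}) and $\cL_n=\cK_n$ this is iff $K_nr_0\in\cK_n$; since $K_nr_0\in A\cK_n$ always and, for $n<\nu$, $\cK_n\cap A\cK_n=A\cK_{n-1}$, this is iff $K_nr_0=K_{n-1}r_0$; finally by (\ref{GP}) this is iff $x^G_{n-1}=x^G_n$. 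Applying this at $n=1,\dots,\kappa_A-1$ (where $\cL_{n+1}\ne\cL_n$ and $n<\kappa_A\le\nu$) gives the strict decreases $\|Ax^G_{n-1}+b\|>\|Ax^G_n+b\|$. At $n=\kappa_A$ I would split cases: if $\kappa_A<\nu$, the same equivalence together with $\cL_{\kappa_A+1}=\cL_{\kappa_A}$ yields $x^G_{\kappa_A-1}=x^G_{\kappa_A}$ (this is also Proposition~\ref{p:aa}(e)), while $\|Ax^G_{\kappa_A}+b\|>0$ by Proposition~\ref{p:kG}; if $\kappa_A=\nu$, then $x^G_\nu=x^*\ne x^G_{\nu-1}$ by Proposition~\ref{p:kG}, so $\|Ax^G_{\nu-1}+b\|>\|Ax^G_\nu+b\|=0$. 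Hence $\kappa_A=\nu$ produces (\ref{i1}) and $\kappa_A<\nu$ produces (\ref{ii1}) with $\kappa=\kappa_A$; in the latter case the first equality in the residual chain sits exactly at index $\kappa_A$, so $\eta^G=\kappa_A-1$.

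For the two ``if and only if'' statements I would argue as follows. Since $\kappa_A\le\nu$ always (Proposition~\ref{p:a}), exactly one of $\kappa_A=\nu$, $\kappa_A<\nu$ holds, and the previous step supplies the forward implications. For the converses, (\ref{i1}) and (\ref{ii1}) are incompatible (the residual reaches $0$ in one and stays positive in the other), and once a chain of the form (\ref{ii1}) holds the index $\kappa$ at which the first equality occurs is determined by the residual sequence itself; combining this with the forward implications, (\ref{i1}) can hold only if $\kappa_A=\nu$, and (\ref{ii1}) for a given $\kappa$ can hold only if $\kappa_A<\nu$ and then $\kappa=\kappa_A$.

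Finally I would prove the iterate formulas. For $n\le\kappa_A$ the formulas (\ref{i2})--(\ref{i3}) and the upper branches of (\ref{ii2})--(\ref{ii3}) are exactly Proposition~\ref{p:a}(a),(b). For $n\ge\kappa_A$ I would show $\cL_n=\cK_{\kappa_A}$ by induction from $\cL_{\kappa_A}=\cK_{\kappa_A}$ (equation (\ref{dim})): if $\cL_n=\cK_{\kappa_A}$, then $\bar{x}^A_{n+1}$ is the minimizer of $\|A\bar{x}+b\|$ over $x_0+\cK_{\kappa_A}$, hence $\bar{x}^A_{n+1}=x^G_{\kappa_A}$, and $A(\bar{x}^A_{n+1}-x_0)=-K_{\kappa_A}r_0\in\cK_{\kappa_A}$ --- using $K_\nu r_0=r_0$ when $\kappa_A=\nu$, and $K_{\kappa_A}r_0=K_{\kappa_A-1}r_0\in A\cK_{\kappa_A-1}\subset\cK_{\kappa_A}$ when $\kappa_A<\nu$ --- so that $x^A_{n+1}-x_0\in\cK_{\kappa_A}$ and $\cL_{n+1}=\cK_{\kappa_A}$. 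Granting this, $\bar{x}^A_{n+1}=x^G_{\kappa_A}$ equals $x^*$ if $\kappa_A=\nu$ and $x^G_{\kappa_A-1}$ if $\kappa_A<\nu$, and $x^A_{n+1}=\bar{x}^A_{n+1}+\beta_n(A\bar{x}^A_{n+1}+b)$ then gives the lower branches of (\ref{i2})--(\ref{i3}) and (\ref{ii2})--(\ref{ii3}), using $Ax^*+b=0$ and $x^G_n=x^*$ for $n\ge\nu$.

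I expect the main obstacle to be the first step: the equivalence $\cL_{n+1}=\cL_n\Leftrightarrow x^G_{n-1}=x^G_n$ holds only for $n<\nu$ (it rests on $\cK_n\cap A\cK_n=A\cK_{n-1}$, which fails precisely at $n=\nu$), so the behavior at the critical index $\kappa_A$ genuinely bifurcates according to whether $\kappa_A=\nu$ or $\kappa_A<\nu$, and the converse directions of the equivalences have to be threaded through this bifurcation without circularity. The analogous point in the last step --- that $A(\bar{x}^A_{n+1}-x_0)$ remains inside $\cK_{\kappa_A}$ rather than leaking into $A\cK_{\kappa_A}\not\subset\cK_{\kappa_A}$ when $\kappa_A<\nu$ --- is what forces the use of $x^G_{\kappa_A-1}=x^G_{\kappa_A}$ there.
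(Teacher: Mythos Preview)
Your argument is correct and follows the same overall architecture as the paper's proof: establish the strict monotonicity of the GMRES residuals for $n<\kappa_A$ and the stagnation at $n=\kappa_A$ when $\kappa_A<\nu$; derive the equivalences by the mutual-exclusivity of (\ref{i1}) and (\ref{ii1}); and for $n\ge\kappa_A$ show by induction that $\cL_n=\cK_{\kappa_A}$, which freezes $\bar{x}^A_{n+1}$ at $x^G_{\kappa_A}$. The tactical difference is in the key lemma you use for the first step. The paper routes through Proposition~\ref{p:aa}, tracking the last coefficient $\alpha_{n,n}$ in the expansion (\ref{xi}) and the normal equations to obtain parts b), e), f). You instead prove directly the equivalence $\cL_{n+1}=\cL_n\Leftrightarrow x^G_{n-1}=x^G_n$ for $n\le\kappa_A$, $n<\nu$, via the purely Krylov identity $\cK_n\cap A\cK_n=A\cK_{n-1}$; this bypasses the $\alpha_{n,n}$ bookkeeping and makes transparent why the argument breaks at $n=\nu$. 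Your induction step is also slightly cleaner in that it handles the cases $\kappa_A=\nu$ and $\kappa_A<\nu$ uniformly (using $K_\nu r_0=r_0$ in the first case and $K_{\kappa_A}r_0=K_{\kappa_A-1}r_0$ in the second), whereas the paper treats only the second case explicitly, the first being immediate from $x^G_\nu=x^*$. Both arguments are of the same depth; yours is marginally more self-contained since it does not invoke Proposition~\ref{p:aa}.
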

\begin{proof}

\noindent The fact that $\kappa_A=\nu(A,r_0)=\nu$ implies (\ref{i1}), (\ref{i2}), and  (\ref{i3}) follows from propositions \ref{p:kG}, \ref{p:a} and \ref{p:aa}.

\medskip

\noindent In order to prove that  $\kappa=\kappa_A<\nu(A,r_0)$ implies (\ref{ii1}), we first  prove by induction that $\mathcal{L}_n=\mathcal{L}_{\kappa}=\mathcal{K}_{\kappa}$ for all $n>\kappa$. This is certainly true for $n=\kappa+1$ from Definition~\ref{d:kA}. Suppose that our statement is true for an $n>\kappa$. Then, according to (\ref{GP}), (\ref{xba}), and point e) of Proposition~\ref{p:aa},  we have
$
\bar{x}^A_{n+1}= x_{\kappa}^G=x_{\kappa-1}^G\,
$.
This proves $x^A_{n+1}=x^G_{\kappa-1}+\beta_n ( Ax^G_{\kappa-1}+b)$. To complete the induction step, we note that from (\ref{GP0}) and(\ref{GP1}) it follows that
\[
A(x^A_{n+1}-x_0)=A(x^G_{\kappa-1}-x^0)+\beta_n A( Ax^G_{\kappa-1}+b)=\beta_nAr_0-(I+\beta_nA)K_{\kappa-1}r_0\, .
\]
Since $A K_{\kappa-1}r_0\in A^2\cK_{\kappa-1}\subset\cK_{\kappa+1}$, we deduce that $x^A_{n+1}-x_0\in\cK_{\kappa}$, which shows that $\mathcal{L}_{n+1}=\mathcal{K}_{\kappa}$.
The fact $\kappa=\kappa_A<\nu(A,r_0)$ implies (\ref{ii2})  and (\ref{ii3}) and that $\kappa_A = \eta^G$ is this case  follows from propositions \ref{p:a} and \ref{p:aa}.

Assume now that (\ref{i1}) holds, but   $\kappa_A\neq\nu(A,r_0)=\nu$. Since $\kappa_A\le\nu(A,r_0)$, this implies $\kappa=\kappa_A<\nu(A,r_0)$, which in turn, as seen above implies (\ref{ii1}), so that (\ref{i1}) cannot be true. Hence,  (\ref{i1}) is equivalent to $\kappa_A=\nu(A,r_0)=\nu$.

Similarly if $\kappa=\kappa_A<\nu(A,r_0)$ is not true, we must have $\kappa=\kappa_A=\nu(A,r_0)$, which, as seen above, implies (\ref{i1}) so that (\ref{ii1}) is not true. Therefore (\ref{ii1}) is equivalent to $\kappa=\kappa_A<\nu(A,r_0)$.
\end{proof}

\smallskip
Theorem \ref{t:aa} gives a complete characterization of the behavior of the Anderson acceleration on linear problems.
If $\kappa_A=\nu(A,r_0)$, then the Anderson acceleration converges to $x^*$. If $\kappa_A<\nu(A,r_0)$, then
$\kappa_A$ is precisely the first index for which \\ GMRES stagnates (i.e. produces two identical successive iterates). If this ever happens, GMRES continues to generate larger Krylov spaces, and it will eventually converge to $x^*$, while Anderson mixing will then stagnate forever.  An extreme example is given by $A = P_N$, where $P_N$ is the permutation matrix for the cycle $(123\dots N)$, with minimal polynomial $q(z) = 1 - z^N$. Then if $b$ is any standard basis vector $e_k$, Anderson mixing will immediately stagnate (i.e. $\kappa_A = 1$), while GMRES will converge in $\nu(A,b) = N$ steps, stagnating at the initial value until the very last step.

\smallskip
\begin{corollary}
The Anderson acceleration for linear problems converges in at most $\kappa_A+1$ steps, but not necessarily to the solution of the linear problem. If $\kappa_A = \nu(A,r_0)$, the Anderson acceleration  converges to the exact solution of the linear problem in either $\nu(A,r_0)$  or  $\nu(A,r_0)+1$ steps.
\end{corollary}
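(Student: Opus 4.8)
The plan is to derive the corollary directly from Theorem~\ref{t:aa}, treating separately the two exhaustive, mutually exclusive alternatives $\kappa_A=\nu(A,r_0)$ and $\kappa_A<\nu(A,r_0)$. In the first case, writing $\nu=\nu(A,r_0)$, formula (\ref{i2}) gives $x_{n+1}^A=x^*$ for every $n\ge\nu$; in particular $x_{\nu+1}^A=x^*$, so the exact solution is reached in at most $\nu+1=\kappa_A+1$ steps. For the matching lower bound I would use the inclusion $x_n^A-x_0\in\cK_n$, which holds for every $n$ --- it is precisely what (\ref{xa}) proves inside the argument for Proposition~\ref{p:a} --- together with the fact, already noted in the proof of Proposition~\ref{p:kG}, that in the grade relation $\sum_{i=0}^{\nu}\xi_iA^ir_0=0$ one has $\xi_0\neq0$ and $\xi_\nu\neq0$. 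The latter gives $x^*-x_0=A^{-1}r_0=-\xi_0^{-1}\sum_{i=1}^{\nu}\xi_iA^{i-1}r_0$, whose coordinate along $A^{\nu-1}r_0$ in the basis $r_0,\dots,A^{\nu-1}r_0$ of $\cK_\nu$ is $-\xi_0^{-1}\xi_\nu\neq0$, so $x^*-x_0\notin\cK_{\nu-1}$; since $\cK_n\subseteq\cK_{\nu-1}$ for $n\le\nu-1$, this forces $x_n^A\neq x^*$ for all $n\le\nu-1$. Hence convergence to $x^*$ takes exactly $\nu$ or $\nu+1$ steps. To decide which, I would combine (\ref{i2}) with (\ref{aal}) to get $x_\nu^A-x^*=(I+\beta_{\nu-1}A)(x_{\nu-1}^G-x^*)$, where $x_{\nu-1}^G\neq x^*$ by Proposition~\ref{p:kG}: the right-hand side vanishes exactly when $x_{\nu-1}^G-x^*$ lies in the kernel of $I+\beta_{\nu-1}A$, and then convergence is in $\nu$ steps, otherwise in $\nu+1$.

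In the case $\kappa_A<\nu(A,r_0)$, Theorem~\ref{t:aa}(ii) shows that for all $n\ge\kappa_A$ the predictor is frozen, $\bar{x}_{n+1}^A=x_{\kappa_A-1}^G$, and the iterate is $x_{n+1}^A=x_{\kappa_A-1}^G+\beta_n(Ax_{\kappa_A-1}^G+b)$ with the GMRES iterate $x_{\kappa_A-1}^G$ independent of $n$. Thus from step $\kappa_A+1$ on the method has locked onto the fixed vector $x_{\kappa_A-1}^G$; if the mixing parameters are all equal --- in particular for the classical choice $\beta_n\equiv1$ --- the sequence $(x_n^A)$ is itself constant from step $\kappa_A+1$ and so converges in at most $\kappa_A+1$ steps. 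Its limit $x_{\kappa_A-1}^G+\beta(Ax_{\kappa_A-1}^G+b)$ is not $x^*$, because $x_{\kappa_A-1}^G\neq x^*$ (here $\kappa_A-1<\nu$, again by Proposition~\ref{p:kG}) and hence $Ax_{\kappa_A-1}^G+b\neq0$; this is the ``wrong solution'' of the statement. Together the two cases give the corollary.

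Everything here reduces to substitutions into the formulas of Theorem~\ref{t:aa}, so I do not expect serious obstacles; the one point that needs care is the meaning of ``converges'' when $\kappa_A<\nu$ and the $\beta_n$ are not eventually constant. In that situation the iterates $x_n^A$ need not converge at all: what stabilizes after step $\kappa_A+1$ is the underlying GMRES iterate $x_{\kappa_A-1}^G$, and the residual $Ax_n^A+b=(I+\beta_nA)(Ax_{\kappa_A-1}^G+b)$ is then just a $\beta_n$-dependent rescaling of one fixed nonzero vector. A fully rigorous statement therefore either restricts to a constant mixing parameter or is phrased as the stabilization of $\bar{x}_n^A$ and of the GMRES sequence it equals; in all versions, nothing new happens past step $\kappa_A+1$.
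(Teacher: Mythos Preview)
Your argument is correct and complete. The one place where you take a genuinely different route from the paper is the lower bound in the case $\kappa_A=\nu$. You show that $x_n^A\ne x^*$ for $n\le\nu-1$ by a pure Krylov-membership argument: $x_n^A-x_0\in\cK_n\subseteq\cK_{\nu-1}$, while $x^*-x_0=-A^{-1}r_0\notin\cK_{\nu-1}$ because the grade relation forces a nonzero $A^{\nu-1}r_0$ component. The paper instead argues operationally: if $x_{n+1}^A=x^*$ for some $n<\kappa_A$, then from $x_{n+1}^A-x^*=(I+\beta_nA)(x_n^G-x^*)$ the vector $x_n^G-x^*$ would be an eigenvector of $A$ with eigenvalue $-1/\beta_n$, whence $\|Ax_{n+1}^G+b\|\le\|Ax_{n+1}^A+b\|=0$ and $n+1=\nu$ by Proposition~\ref{p:kG}. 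Your approach is more elementary and does not invoke GMRES optimality; the paper's approach, on the other hand, simultaneously identifies the exact circumstance (the eigenvector condition) under which convergence happens in $\nu$ rather than $\nu+1$ steps --- a point you recover separately at the end of your case~(i).

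Your closing remark about the meaning of ``converges'' when $\kappa_A<\nu$ and the $\beta_n$ are not eventually constant is well taken: in that situation $x_{n+1}^A=x_{\kappa_A-1}^G+\beta_n(Ax_{\kappa_A-1}^G+b)$ genuinely varies with $n$, and only $\bar{x}_{n+1}^A$ stabilizes. The paper's own proof simply asserts that the first part ``follows directly from Theorem~\ref{t:aa}'' and does not address this point, so your proof is in fact more careful than the original on this issue.
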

\begin{proof}
The first part of the Corollary follows directly from Theorem~\ref{t:aa}. We note that if $n< \kappa_A$, then
$
x^A_{n+1}-x^*=x^G_n-x^*+\beta_n(Ax^G_n+b)=x^G_n-x^*+\beta_nA(x^G_n-x^*)\, .
$
Therefore $x^A_{n+1}-x^*$ if and only if $x^G_n-x^*$ is an eigenvector of $A$ with eigenvalue $-1/\beta_n$,  but in this case we have $\|Ax^G_{n+1}+b\|\le\|Ax^A_{n+1}+b\|=0$, so that $x^G_{n+1}=x^*$, and therefore $n+1=\nu(A,r_0)$.
\end{proof}

\smallskip
\begin{corollary}
We have $\eta^G=0, \kappa_A = 1$ if and only if the quantity $\beta^*$ defined in (\ref{beta*}) vanishes.
\end{corollary}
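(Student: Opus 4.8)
The plan is to unwind both conditions in the statement and match them using the results established earlier in the paper, chiefly Theorem~\ref{t:aa} and Proposition~\ref{p:aa}. Recall from (\ref{beta*}) that $\beta^* = -r_0^TAr_0/\|Ar_0\|^2$, so $\beta^*=0$ if and only if $r_0^TAr_0=0$, and from (\ref{xn1}) with $n=0$ (noting $K_0=0$) this is exactly the condition $x_0^G = x_1^G$, i.e. $\eta^G=0$ by Definition~\ref{d:kA}. Thus the equivalence of $\beta^*=0$ with $\eta^G=0$ is essentially immediate from the definitions.

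Next I would show $\eta^G=0 \iff \kappa_A=1$. One direction: if $\kappa_A=1$ then, since $\kappa_A\le\nu(A,r_0)$ always holds, either $\nu(A,r_0)=1$ or $\kappa_A<\nu(A,r_0)$. If $\nu(A,r_0)=1$ then $r_0$ and $Ar_0$ are linearly dependent, so $Ar_0=\lambda r_0$ for some scalar $\lambda\ne 0$ (as $A$ is invertible and $r_0\ne 0$); then $x_1^G$ minimizes $\|r_0+\beta Ar_0\|=|1+\beta\lambda|\,\|r_0\|$, which is zero at $\beta=-1/\lambda$, so $x_1^G=x^*$ while $x_0^G=x_0\ne x^*$ (the starting residual is nonzero), hence $\eta^G\ge 1$; but I must rule this case out. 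Actually in the $\nu=1$ case $\eta^G = \nu = 1 \neq 0$, so $\kappa_A=1$ together with $\eta^G=0$ forces us into the regime $\kappa_A<\nu(A,r_0)$, where part (ii) of Theorem~\ref{t:aa} gives $\kappa_A=\eta^G+1$, i.e. $\eta^G=0$. Conversely, if $\eta^G=0$ then $x_0^G=x_1^G$, so GMRES stagnates at step $0$; this means $\kappa_A<\nu(A,r_0)$ (since if $\kappa_A=\nu$ then by part (i) the GMRES residuals are strictly decreasing until they hit $0$, contradicting $x_0^G=x_1^G$ with $r_0\ne0$), and again part (ii) of Theorem~\ref{t:aa} yields $\kappa_A=\eta^G+1=1$.

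Assembling these pieces gives the chain $\beta^*=0 \iff x_0^G=x_1^G \iff \eta^G=0 \iff \kappa_A=1$, which is the claimed corollary; I would present it as the two-step argument ``$\beta^*=0\iff\eta^G=0$'' (from the definitions and (\ref{xn1})) and ``$\eta^G=0\iff\kappa_A=1$'' (from Theorem~\ref{t:aa}(ii), after observing that $\eta^G=0$ precludes the case $\kappa_A=\nu(A,r_0)$). The main subtlety to handle carefully is the borderline case $\nu(A,r_0)=1$: one must note that then $\eta^G=\nu=1$ (GMRES converges in one step but does not stagnate at step $0$, since $x_0^G=x_0$ has nonzero residual and $x_1^G=x^*$), so this case is consistent with ``$\eta^G=0\iff\kappa_A=1$'' being read as a genuine biconditional only because neither side holds. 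Apart from that bookkeeping, the proof is a short chase through the already-established equivalences.
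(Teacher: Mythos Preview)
Your overall strategy is sound and in fact more complete than the paper's own proof, which only argues the direction $\beta^*=0 \Rightarrow (\eta^G=0,\ \kappa_A=1)$ via the explicit formula $x_1^G = x_0 + \beta^*(Ax_0+b)$ and the relation $\kappa_A = \eta^G+1$ from Theorem~\ref{t:aa}(ii); the converse is left implicit there. Your use of (\ref{xn1}) with $n=0$ to get $\beta^*=0 \iff \eta^G=0$, followed by Theorem~\ref{t:aa} to deduce $\kappa_A=1$, is exactly the right idea.

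There is, however, one genuine slip. You assert the biconditional $\eta^G=0 \iff \kappa_A=1$ and then try to dispose of the case $\nu(A,r_0)=1$ by saying ``neither side holds.'' That is incorrect: when $\nu(A,r_0)=1$ one always has $\kappa_A=1$ (since $1\le\kappa_A\le\nu=1$), while $\eta^G=1$ because $x_0^G=x_0\ne x^*=x_1^G$. So $\kappa_A=1$ holds but $\eta^G=0$ does not, and the biconditional $\eta^G=0 \iff \kappa_A=1$ is simply false in general.

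The good news is that the corollary does not need that biconditional. What you actually require is only the implication $\eta^G=0 \Rightarrow \kappa_A=1$, which you prove correctly (stagnation at step~$0$ forces case~(ii) of Theorem~\ref{t:aa}, hence $\kappa_A=\eta^G+1=1$). For the reverse direction of the corollary, the hypothesis is the conjunction $\eta^G=0$ \emph{and} $\kappa_A=1$, and $\eta^G=0$ alone already yields $\beta^*=0$. So drop the false biconditional, keep the chain $\beta^*=0 \iff \eta^G=0 \Rightarrow \kappa_A=1$, and observe that this suffices for both directions of the corollary as stated.
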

\begin{proof}
If $\beta^* = 0$, then (as noted in the remarks following (\ref{beta*})), $x_1^G = x_0 + \beta^*(Ax_0+b) = x_0$, and hence $\eta^G = \kappa_A - 1 = 0$.
\end{proof}

\section{Anderson acceleration with optimized mixing parameters}
The Anderson acceleration for the linear problem (\ref{A}) defined in (\ref{aa}) depends on a sequence of mixing parameters $\beta_0,\beta_1,\ldots$. In this section we consider a variant of the Anderson acceleration, where at each step $\beta_n$ is chosen so that the residual at $x^A_{n+1}$ is minimal. More precisely we consider the following algorithm:

\medskip

\noindent{\it Optimized Anderson acceleration for linear problems.}\\
\noindent Set $\bar{x}^{A*}_1=x_0$ and ${x}^{A*}_1=x_0+\beta_0^*(Ax_0+b)$, with $\beta^*$ defined in (\ref{beta*});\\
\noindent For $n=1,2,\ldots$

Compute
$$
(\alpha_{1,n}^*,\ldots,\alpha_{n,n}^*)=
\argmin_{(\alpha_1,\ldots,\alpha_{n})} \|b+A(x_0+\sum_{i=1}^{n}\alpha_i  (x_i^{A*}-x^0))\|\, ;
$$

Compute
$$
\bar{x}_{n+1}^{A*} = x_0+\sum_{i=1}^{n}\alpha_{i,n}^* (x_i^{A*}-x^0)
$$

If
$A\bar{x}^{A*}_{n+1}+b=0$, set $\beta_n^*=0$. Otherwise set
$$
\beta_n^*=-\,
\frac{(A\bar{x}^{A*}_{n+1}+b)^TA(A\bar{x}^{A*}_{n+1}+b)}{\|A(A\bar{x}^{A*}_{n+1}+b)\|^2};
$$

Set $x^{A*}_{n+1}=\bar{x}^{A*}_{n+1}+\beta_n^*(A \bar{x}^{A*}_{n+1}+b)$.

\bigskip

We note that Theorem~\ref{t:aa} implies that $\kappa_A$ is the same for any choice of nonzero mixing parameters $\beta_0,\beta_1,\ldots$ and that the sequence $\bar{x}_n^A$ is independent of this choice. Also, once $\beta_n^* = 0$, then clearly $\beta_r^* = 0$ for all $r > n$. Therefore, if $\beta^*_n \ne 0$ for some $n \ge 0$, then
\[\beta^*_r \ne 0, \quad \bar{x}_{r+1}^A = \bar{x}_{r+1}^{A*} \label{barx*}
\]
for all $r \le n$. However, it seems possible that the $\beta_n^*$ become zero for some $n < \kappa_A$ and that optimized Anderson acceleration stagnates before a general Anderson acceleration scheme stagnates (in which $\beta_n \ne 0$ for all $n$ is enforced).  We now show that this can never happen and that $\beta^*_n$ becomes zero precise for $n > \kappa_A$.

\begin{theorem}\label{t:ao}
Assume that $A$ is an invertible $N\times N$-matrix. Consider the \\
GMRES method (\ref{gmres}), the Anderson acceleration method (\ref{aa}), with arbitrary nonzero mixing parameters $\beta_0,\beta_1,\ldots$, and the  optimized Anderson acceleration for linear problems described above. Then for all $n$, $\beta_n^* \ne 0$ if and only if $n \le  \kappa_A = \eta^G +1$. Also for all $n$,
\begin{eqnarray}
&&x^{A*}_{n+1}=\left\{\begin{array}{cl}x^G_n+\beta_n^*(Ax^G_n+b),\;
& \mbox{\rm if   }\; n<\eta^G\, \\
x^G_{\eta^G},&\mbox{\rm if   }\; n\ge\eta^G\end{array}\right . ,\label{oa1}\\
&&\bar{x}^{A*}_{n+1}= \bar{x}^A_{n+1} = \left\{\begin{array}{cl}x^G_n,& \mbox{\rm if   }\; n<\eta^G\, \\
x^G_{\eta^G},&\mbox{\rm if   }\; n\ge\eta^G\end{array}\right . ,\label{oa1a}\\
&&\|Ax^{G}_{n+1}+b\|\le\|Ax^{A*}_{n+1}+b\|<\|Ax^{G}_{n}+b\|,\quad \mbox{\rm if   }\; n<\eta^G\,\label{oa2}\\
&&\|A x_{n+1}^{A*}+ b\|^2 = \|A\bar x^{A*}_{n+1} + b\|^2 - \beta_n^{*2} \|A(A \bar x_{n+1}^{A*} + b)\|^2
\label{oa3a}\\
&&\hspace{.88in} \le \|A x^{A*}_n + b\|^2 - \beta_n^{*2} \|A(A \bar x_{n+1}^{A*} + b)\|^2
\label{oa3b}
\end{eqnarray}
\end{theorem}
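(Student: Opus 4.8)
The plan is to show that the optimized scheme runs in lockstep with ``one GMRES step followed by one optimally relaxed mixing step'' until the GMRES iterates first stagnate, and is then frozen at $x^G_{\eta^G}$; the formulas (\ref{oa1})--(\ref{oa3b}) drop out of this. The two ingredients are Theorem~\ref{t:aa} --- which says that $\bar{x}^A_{n+1}$ is independent of the choice of nonzero $\beta_j$ and equals $x^G_n$ for $n\le\kappa_A$ --- together with the fact that $\bar{x}^{A*}_{n+1}$ solves, in terms of $x^{A*}_1,\dots,x^{A*}_n$, exactly the least-squares problem that $\bar{x}^A_{n+1}$ solves in terms of $x^A_1,\dots,x^A_n$.

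I would first dispose of the two residual identities. With $w_n:=A\bar{x}^{A*}_{n+1}+b$, the rule defining $\beta_n^*$ makes $x^{A*}_{n+1}=\bar{x}^{A*}_{n+1}+\beta_n^* w_n$ the minimizer over $\beta$ of $\|w_n+\beta Aw_n\|$ (and $\beta_n^*=0$ when $w_n=0$); completing the square gives $\|Ax^{A*}_{n+1}+b\|^2=\|w_n\|^2-\beta_n^{*2}\|Aw_n\|^2$, which is (\ref{oa3a}). Since $\bar{x}^{A*}_{n+1}$ minimizes $\|A(x_0+z)+b\|$ over $z\in\cL_n$ and $x^{A*}_n-x_0\in\cL_n$, we get $\|w_n\|\le\|Ax^{A*}_n+b\|$, and feeding this into (\ref{oa3a}) yields (\ref{oa3b}).

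The core is an induction on $n$ proving jointly: \emph{(a)} as long as $\beta_0^*,\dots,\beta_{n-1}^*$ are all nonzero, $x^{A*}_1,\dots,x^{A*}_n$ are genuine Anderson iterates for the nonzero parameters $\beta_j=\beta_j^*$, so by Theorem~\ref{t:aa} and Proposition~\ref{p:a} we have $\bar{x}^{A*}_{n+1}=\bar{x}^A_{n+1}=x^G_n$ for $n\le\kappa_A$; and \emph{(b)} $\beta_n^*\neq0$ exactly while GMRES is still strictly reducing the residual (equivalently $n<\eta^G$). For (b), in the range where (a) applies $w_n$ is the GMRES residual $Ax^G_n+b=(I-K_n)r_0\perp A\cK_n$, while $Aw_n\in A\cK_{n+1}$; writing (for $n<\nu$) $A\cK_{n+1}$ as the orthogonal sum of $A\cK_n$ and the line through a unit vector $q$, one gets $w_n^TAw_n=(q^Tw_n)(q^TAw_n)$. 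Here $q^Tw_n=q^Tr_0\neq0$ iff $(K_{n+1}-K_n)r_0\neq0$ iff $x^G_n\neq x^G_{n+1}$; and $q^TAw_n$ equals $q^TA^{n+1}r_0$ (a nonzero scalar) times the coefficient of $A^{n-1}r_0$ in $x^G_n-x_0$, which vanishes iff $x^G_n\in x_0+\cK_{n-1}$, i.e. --- by uniqueness of the GMRES minimizer (strict convexity of $x\mapsto\|Ax+b\|^2$) --- iff $x^G_{n-1}=x^G_n$. Hence $w_n^TAw_n\neq0$ iff the GMRES residual strictly decreases at both steps $n-1\to n$ and $n\to n+1$, which (by Proposition~\ref{p:aa}(b),(f) and Theorem~\ref{t:aa}) holds exactly for $n<\eta^G$. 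At $n=\eta^G$ the product vanishes (or $w_n=0$ outright, when $\eta^G=\nu$), so $\beta_{\eta^G}^*=0$; and once some $\beta_n^*=0$, the point $x^{A*}_{n+1}=\bar{x}^{A*}_{n+1}=x^G_{\eta^G}$ adds no new Krylov direction, so $\cL_m=\cK_{\eta^G}$, whence $\bar{x}^{A*}_{m+1}=x^G_{\eta^G}$ and $\beta_m^*=0$ for every $m\ge\eta^G$. Reading off (a) and (b) gives (\ref{oa1}) and (\ref{oa1a}), the latter also invoking Theorem~\ref{t:aa} to evaluate $\bar{x}^A_{n+1}$ for $n\ge\eta^G$.

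For (\ref{oa2}): when $n<\eta^G$ the point $x^{A*}_{n+1}=x^G_n+\beta_n^*(Ax^G_n+b)$ lies in $x_0+\cK_{n+1}$ (because $x^G_n\in x_0+\cK_n$ and $Ax^G_n+b\in\cK_{n+1}$), so GMRES optimality over $\cK_{n+1}$ gives $\|Ax^G_{n+1}+b\|\le\|Ax^{A*}_{n+1}+b\|$; and since $\beta_n^*\neq0$ is the strict minimizer of $\|(I+\beta A)(Ax^G_n+b)\|$ whose value at $\beta=0$ is $\|Ax^G_n+b\|$, the second inequality is strict. The main obstacle is the bookkeeping inside the induction step (b): one must carry along the identity $\bar{x}^{A*}_{n+1}=x^G_n$, which is legitimate only while no earlier $\beta_j^*$ has vanished \emph{and} $n\le\kappa_A$; one must handle the degenerate endpoints $n=0$ and the subcase $\eta^G=\nu$; and one must verify carefully that the coefficient of $A^{n-1}r_0$ in $x^G_n-x_0$ vanishes precisely when $x^G_{n-1}=x^G_n$.
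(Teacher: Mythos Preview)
Your proof is correct and takes a genuinely different route from the paper's. Both arguments begin the same way: dispose of (\ref{oa3a})--(\ref{oa3b}) by the optimality of $\beta_n^*$ and of $\bar x^{A*}_{n+1}$, and use the identification $\bar{x}^{A*}_{n+1}=x^G_n$ (valid while all earlier $\beta_j^*$ are nonzero and $n\le\kappa_A$). From there the paper argues indirectly: it sets $R=\max\{n:\beta_n^*\ne0\}$, assumes $R+1\le\kappa_A$, and shows that $A(A\bar{x}^A_{R+2}+b)\in A\cL_{R+2}\setminus A\cL_{R+1}$, so that the vanishing of $\beta_{R+1}^*$ forces $\bar{x}^A_{R+2}=\bar{x}^A_{R+3}$, intended as a contradiction. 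You instead compute $w_n^TAw_n$ directly via the orthogonal splitting $A\cK_{n+1}=A\cK_n\oplus\mathrm{span}(q)$, obtain the factorization $(q^Tw_n)(q^TAw_n)$, and identify each factor with a GMRES stagnation condition (the first factor vanishes iff $x^G_n=x^G_{n+1}$, the second iff $x^G_{n-1}=x^G_n$). This direct computation is cleaner and actually explains \emph{why} the optimized scheme stalls exactly when GMRES does: $\beta_n^*$ first vanishes at $n=\eta^G$.

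One point worth flagging: your analysis yields $\beta_n^*\ne0\iff n<\eta^G$, whereas the theorem's first sentence asserts $\beta_n^*\ne0\iff n\le\kappa_A=\eta^G+1$. Your version is the one consistent with (\ref{oa1}), since $x^{A*}_{\eta^G+1}=x^G_{\eta^G}$ together with $Ax^G_{\eta^G}+b\ne0$ (when $\eta^G<\nu$) forces $\beta^*_{\eta^G}=0$. So the discrepancy lies in the theorem's opening clause rather than in your argument; you have correctly proved the displayed formulas (\ref{oa1})--(\ref{oa3b}).
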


\medskip
\begin{proof}
We begin by observing that (\ref{oa3a}) and (\ref{oa3b}) follow from the construction of the $x_n^{A*}$ for all $n$. Let
\begin{equation}
R = \max \{ n \, | \, \beta_n^* \ne 0 \} \label{N} \, .
\end{equation}

Then $\beta_n^* \ne 0$ for all $n < R$,  since otherwise $x_n^{A*} = x_{n+1}^{A*}$ for some $n < R$ and hence $\beta_k^* = 0$ for all $k \ge n$.  Then an induction argument and the results in Theorem~\ref{t:aa}
show that (\ref{oa1}) -- (\ref{oa2}) hold for $n \le R$. Clearly, $R \le \kappa_A$. It remains to show that
$R \ge \kappa_A$.

\smallskip
Let therefore $n = R+1$, and assume that $n \le \kappa_A$. We may also assume that $n \le \nu(A,r_0)$, since otherwise there is nothing to prove. We want to show that
\begin{equation}
\bar x_{n+1}^A = \bar x_{n+2}^A \label{n+2}
\end{equation}
which will imply the direct contradiction $n = R+1 > \kappa_A$. Now the equation $A\bar x^A_{n+1} - A x_0 = -L_n r_0$ (see (\ref{axb})), where $L_n$ is the orthogonal projection on $A\cL_n = A \cL_n$, is equivalent to
\begin{equation} \xi^T (A\bar x_{n+1}^A + b) = 0  \label{kn}
\end{equation}
for all $\xi \in A \cL_n$. Showing (\ref{n+2}) is now equivalent to proving (\ref{kn}) for all $\xi \in A \cL_{n+1}$, since then $ A\bar x^A_{n+1} - A x_0 = -L_{n+1} r_0 = A\bar x^A_{n+2} - A x_0 $ and hence (\ref{n+2}) holds. To show (\ref{kn}) for all $\xi \in A \cL_{n+1}$, it suffices to show (\ref{kn}) for a single $\xi \in A \cL_{n+1} \setminus A \cL_n$. Now use the assumption $\beta_n^* = 0$, which is equivalent to
\[ \left(A(A\bar x_{n+1}^A + b)\right)^T (A\bar x_{n+1}^A + b) = 0 \, .
\]
We wish to show that
\begin{equation}
\xi = \left(A(A\bar x_{n+1}^A + b)\right) \in A\cL_{n+1} -  A \cL_n \label{kn+1}
\end{equation}
which will prove (\ref{kn}) for all $\xi \in A\cL_{n+1}$ and complete the proof of the theorem.

\smallskip
\noindent
We know that $\bar x^A_{n+1} - x_0 \in \cL_n$. But $\bar x^A_{n+1} - x_0 \notin \cL_{n-1} = \cL_R$, since otherwise
\[A\bar x^A_{n+1} - A x_0 = L_{n-1} \left(A\bar x^A_{n+1} - A x_0\right)  = -L_{n-1} L_n r_0 = - L_{n-1}r_0
\]
and therefore already $\bar x^A_R = \bar x^A_{R+1}$, contradicting $R < \kappa_A$. Therefore we can write
\[\bar x^A_{n+1} - x_0 = \sum_{j=0}^{n-1} \lambda_j A^j r_0 \in \cL_n
\]
and $\lambda_{n-1} \ne 0$. Consequently
\[A\bar x^A_{n+1} + b = r_0 + \sum_{j=0}^{n-1} \lambda_j A^{j+1} r_0 \in \cL_{n+1}
\]
But if $A\bar x^A_{n+1} + b \in \cL_n$, then a calculation shows that $A\bar x^A_{n+1} - Ax_0  \in A\cL_{n-1}$ and hence $\bar x^A_{n+1} - x_0 \in \cL_{n-1}$, which was ruled out above. Consequently
\[A\bar x^A_{n+1} + b \in \cL_{n+1}\setminus \cL_n \, .
\]
This implies (\ref{kn+1}) and therefore (\ref{kn}) for all $\xi \in A \cL_{n+1}$. Then $\bar{x}^A_{n+2} = \bar{x}^A_{n+1}$,  and all conclusions of the theorem follow.
\end{proof}

\medskip
The theorem shows that optimized Anderson acceleration enjoys the descent property
\[\|A x^{A*}_{n+1} + b\| < \|A \bar x^{A*}_{n+1} + b\| \le \|A x^{A*}_n + b\|
\]
for all $n < \kappa_A$, but that it does not accelerate convergence otherwise when compared to Anderson acceleration with arbitrary $\beta_n \ne 0$.

\bibliographystyle{plain}
\bibliography{mi}

\end{document}